\newtheorem{theorem}{Theorem}
\newtheorem{lemma}[theorem]{Lemma}
\newtheorem{conj}{Conjecture}
\theoremstyle{definition}
\newtheorem{definition}{Definition}
\definecolor{db}{rgb}{0.1,0,0.75}
\definecolor{lm}{cmyk}{0 ,1,0,0}
\newcommand{\Z}{\mathbb Z}
\newcommand{\R}{\mathbb R}
\newcommand{\old}[1]{}
\newcommand{\dd}{\delta}
\newtheorem{rem}{Remark}
\newcommand{\CD}{\textbf{Frozen-Boundary Diffusion}}
\newcommand{\CC}{\textbf{FBD}}
\title{Heat diffusion with frozen boundary}
\author[1]{Laura Florescu\thanks{florescu@cims.nyu.edu}}
\author[2]{Shirshendu Ganguly\thanks{sganguly@math.washington.edu}}
\author[3]{Yuval Peres\thanks{peres@microsoft.com}}
\author[4]{Joel Spencer\thanks{spencer@cims.nyu.edu}}
\affil[1]{New York University}
\affil[2]{University of Washington}
\affil[3]{Microsoft Research} 
\affil[4]{New York University}
\begin{document}
\maketitle

\begin{abstract}
Consider ``Frozen Random Walk" on $\mathbb{Z}$: $n$ particles start at the origin. At any discrete time, the leftmost and rightmost  $\lfloor{\frac{n}{4}}\rfloor$ particles are ``frozen" and do not move. The rest of the particles in the ``bulk" independently jump to the left and right uniformly. The goal of this note is to understand the limit of this process under scaling of mass and time.
To this end we study the following deterministic mass splitting process: start with mass $1$ at the origin. At each step the extreme quarter mass on each side is ``frozen". The remaining ``free" mass in the center evolves according to the discrete heat equation. We establish diffusive behavior of this mass evolution and identify the scaling limit under the assumption of its existence. It is natural to expect the limit to be a truncated Gaussian. A naive guess for the truncation point might be the $1/4$ quantile points on either side of the origin. We show that this is not the case and it is in fact determined by the evolution of the second moment of the mass distribution. 

%
\end{abstract}

\section{Introduction}\label{def1}
The goal of this note is to understand the long term behavior of the mass evolution process which is a divisible version of the particle system ``Frozen Random Walk". 
We define \textbf{Frozen-Boundary Diffusion} with parameter $\alpha$ (or \CC-$\alpha$) as follows. Informally it is a sequence $\mu_t$ of symmetric probability distributions on $\mathbb{Z}.$ The sequence has the following recursive definition: given $\mu_t$, the leftmost and rightmost $\frac{\alpha}{2}$ masses are constrained to not move, and the remaining $1-\alpha$ mass diffuses according to one step of the discrete heat equation to yield $\mu_{t+1}$. In other words, we split the mass at site $x$ equally to its two neighbors. Formal descriptions appear later. We briefly remark that this process is similar to Stefan type problems, which have been studied for example in \cite{gravner2000}.

Now we also introduce the random counterpart of \CC-$\alpha$. We define the frozen random walk process (Frozen Random Walk-$(n,1/2)$) as follows: $n$ particles start at the origin. At any discrete time the leftmost and rightmost  $\lfloor{n\frac{\alpha}{2}}\rfloor$ particles are ``frozen" and do not move. The remaining $n-2\lfloor{n\frac{\alpha}{2}}\rfloor$ particles  independently jump to the left and right uniformly.
Letting $n \to \infty$ and fixing $t$, the mass distribution for the above random process converges to the $t^{th}$ element, $\mu_t$, in \CC-$\alpha$. However, if $t$ and $n$ simultaneously go to $\infty$, one has to control the fluctuations to be able to prove any limiting statement.
Figure \ref{f.mass} depicts the mass distribution $\mu_{t}$ and the frozen random walk process for $\alpha=\frac{1}{2}$.

\begin{center}
\begin{figure}[H]
\centering
\includegraphics[scale=0.55]{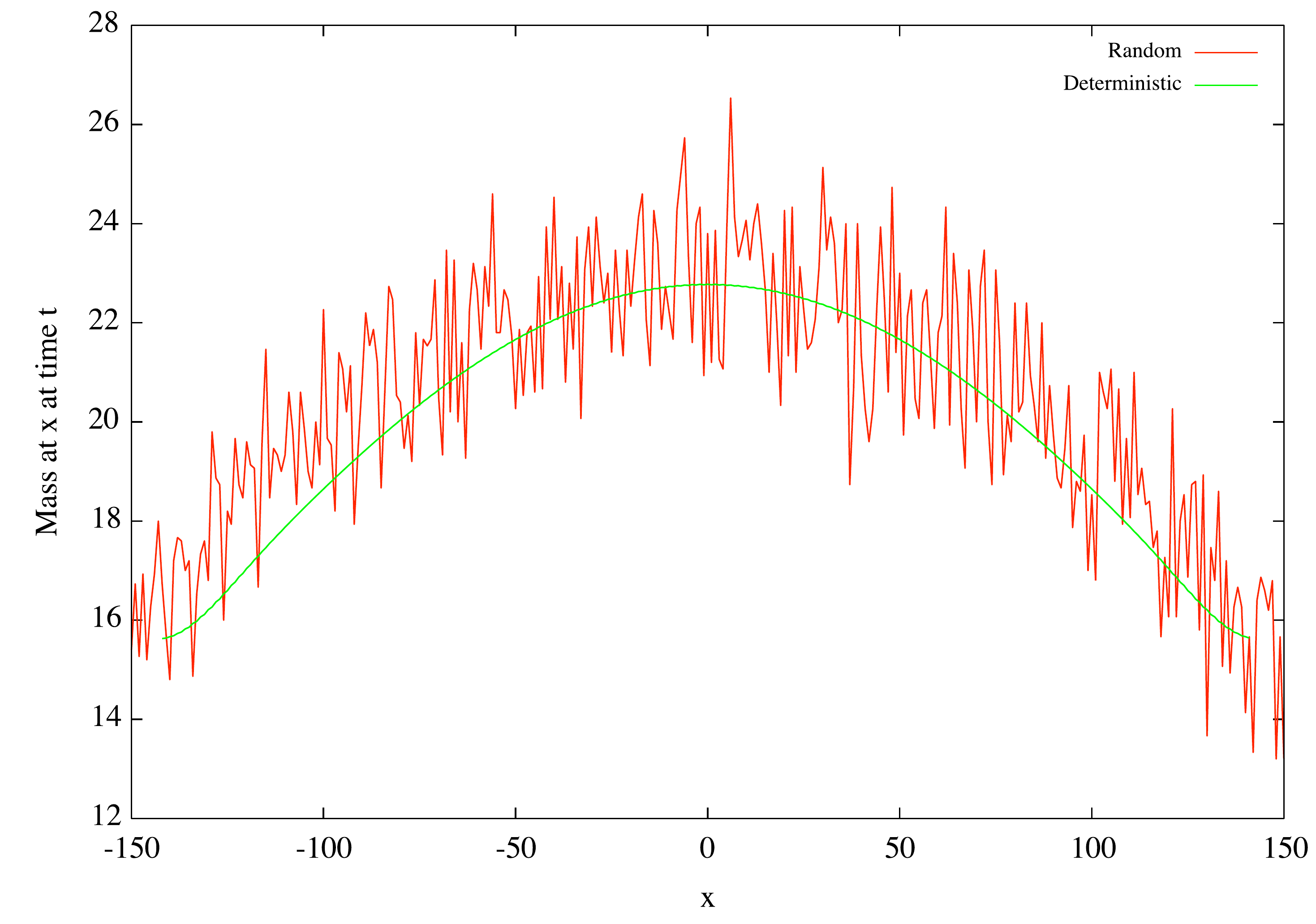}
\caption{\CD-$\frac{1}{2}$  and \textbf{Frozen Random Walk}-$(10000,\frac{1}{2})$ averaged over $15$ trials at $t=25000$.}
\label{f.mass}
\end{figure}
\end{center}

At every step $t$ of \CC-$\alpha$, we also keep track of the location of the boundary of the process, $\beta_t$, which we define as \[\beta_t:=\sup \left\{x\in \Z: \mu_t\left([x,\infty)\right) \ge \frac{\alpha}{2}\right\}.\]
We will show that
 \begin{lemma}
 \label{diffu1} 
For every $\alpha \in(0,1)$ there exist constants $a,b>0$ such that $$a\sqrt{t} < \beta_{t} < b\sqrt{t} \mbox{ }, \forall \,t. $$
\end{lemma}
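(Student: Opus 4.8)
The plan is to read both inequalities off a single exact identity for the second moment
\[
  M_t \;:=\; \sum_{x\in\Z} x^2\,\mu_t(x),
\]
namely $M_t=(1-\alpha)t$ for every $t$. Decompose $\mu_t=f_t+g_t$, where $g_t$ is the \emph{frozen} part (the extreme $\alpha/2$ mass on each side: all of $\mu_t$ at sites $y$ with $|y|>\beta_t$, together with suitable fractions of the atoms at $\pm\beta_t$, for a total mass $\alpha$), and $f_t=\mu_t-g_t$ is the \emph{free} part, of total mass $1-\alpha$. By construction $\mu_{t+1}=g_t+\mathcal H(f_t)$, where $\mathcal H$ sends an atom at $y$ to mass $\tfrac12$ at each of $y\pm1$. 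Freezing transports no mass, so $g_t$ is a common summand of $\mu_t$ and $\mu_{t+1}$ at identical locations and contributes the same amount to $M_{t+1}$ as to $M_t$; and since $\tfrac12(y+1)^2+\tfrac12(y-1)^2=y^2+1$, the operator $\mathcal H$ raises the second moment of $f_t$ by exactly its total mass $1-\alpha$. Hence $M_{t+1}=M_t+(1-\alpha)$, and $M_0=0$ gives $M_t=(1-\alpha)t$.

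For the upper bound, note first that $\beta_t\ge0$: by symmetry $\mu_t([0,\infty))\ge\tfrac12\ge\tfrac\alpha2$ since $\alpha<1$. By the definition of $\beta_t$ and symmetry, $\mu_t(\{y:|y|\ge\beta_t\})\ge\alpha$, so
\[
  (1-\alpha)t \;=\; M_t \;\ge\; \beta_t^2\,\mu_t(\{y:|y|\ge\beta_t\}) \;\ge\; \alpha\,\beta_t^2,
\]
i.e.\ $\beta_t\le\sqrt{(1-\alpha)/\alpha}\,\sqrt t$.

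For the lower bound the extra ingredient — and the one point that takes a little care — is the a priori containment $\mathrm{supp}(\mu_t)\subseteq[-(\beta_t+1),\,\beta_t+1]$. This rests on the monotonicity $\beta_{t+1}\ge\beta_t$: the right half of $g_t$ has mass $\alpha/2$ and sits at sites $\ge\beta_t$, and it is carried unchanged into $\mu_{t+1}$, so $\mu_{t+1}([\beta_t,\infty))\ge\alpha/2$, whence $\beta_{t+1}\ge\beta_t$. The containment then follows by induction on $t$: $f_t$ is supported in $[-\beta_t,\beta_t]$ (everything strictly outside $\pm\beta_t$ is frozen), so $\mathcal H(f_t)$ is supported in $[-(\beta_t+1),\beta_t+1]$, while $\mathrm{supp}(g_t)\subseteq\mathrm{supp}(\mu_t)\subseteq[-(\beta_t+1),\beta_t+1]$ by the inductive hypothesis; monotonicity of $\beta$ then promotes this to the bound for $\mu_{t+1}$. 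Feeding this back into the identity, $(1-\alpha)t=M_t\le(\beta_t+1)^2$, so $\beta_t\ge\sqrt{(1-\alpha)t}-1$. Choosing $a=\tfrac12\sqrt{1-\alpha}$ and $b=\sqrt{(1-\alpha)/\alpha}$ gives $a\sqrt t<\beta_t<b\sqrt t$ once $t$ exceeds an $\alpha$-dependent constant, and the finitely many remaining $t$ are absorbed by shrinking $a$ (using that $\beta_t$ is non-decreasing).

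I expect the support-containment step to be the main obstacle. The second-moment identity by itself controls only how much mass sits far from the origin — exactly what the upper bound needs — but is equally consistent with a distribution peaked at $0$ with light far tails, for which $\beta_t$ would be small; one therefore genuinely has to rule out that any mass ever escaped past $\beta_t+1$, and this is precisely where the immobility of frozen mass, together with the monotonicity of $\beta_t$, is essential.
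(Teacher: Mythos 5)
Your proposal is correct and follows essentially the same route as the paper: the exact identity $M_2(t)=(1-\alpha)t$ for the second moment, the lower bound $M_2(t)\ge\alpha\beta_t^2$ from the mass $\alpha$ sitting at distance $\ge\beta_t$, and the upper bound $M_2(t)\le(\beta_t+1)^2$ via the support containment, which you prove inline by exactly the induction (monotonicity of $\beta_t$ plus the free mass being confined to $[-\beta_t,\beta_t]$) that the paper isolates as Lemma~\ref{twopoint}. Your explicit handling of small $t$ is, if anything, slightly more careful than the paper's.
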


The lemma above suggests that a proper scaling of $\beta_t$ is $\sqrt{t}$. Motivated by this behavior of the boundary $\beta_t$, one can ask the following natural questions:
\begin{itemize}
\item [\textbf{Q1}.]  Does $\frac{\beta_t}{\sqrt{t}}$ converge?
\end{itemize}
\noindent

Considering $\mu_t$ as a measure on $\mathbb{R}$, for $t=0,1,\ldots$ define the Borel measure $\tilde \mu_t(\alpha) = \tilde \mu_t$ on $\R$ equipped with the Borel $\sigma-$algebra such that for any Borel set $A,$ 
\begin{equation}\label{pushfow1}
\tilde \mu_{t}(A)=\mu_{t}(\{y\sqrt{t}: y\, \in\, A\}).
\end{equation}

We can now ask

\begin{itemize}
\item [\textbf{Q2}.] Does the sequence of probability measures $\tilde \mu_t$ have a weak limit?
\item [\textbf{Q3}.] If $\tilde \mu_t$ has a weak limit, what is this limiting distribution?
\end{itemize}


We conjecture affirmative answers to \textbf{Q1} and \textbf{Q2}:

\begin{conj}\label{a2} For every $\alpha \in (0,1),$ there exists $\ell_{\alpha}>0$ such that $$\displaystyle{\lim_{t \to \infty} \frac{\beta_t}{\sqrt{t}}}=\ell_{\alpha}.$$
\end{conj}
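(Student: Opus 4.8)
The plan is to derive Conjecture~\ref{a2} from two facts: the second moment of $\mu_t$ evolves \emph{exactly} linearly, and the rescaled measures $\tilde\mu_t$ have a \emph{unique} possible subsequential weak limit. By Lemma~\ref{diffu1} the numbers $\beta_t/\sqrt t$ already lie in a fixed compact subinterval of $(0,\infty)$, so it is enough to show that all of their subsequential limits coincide; this will follow once we identify the only possible limit $\nu$ of $\tilde\mu_t$, since the support of $\nu$ reads off $\ell_\alpha$.

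\textbf{Step 1: the second moment is exactly linear.} Let $M_t:=\sum_{x\in\Z}x^2\mu_t(x)$. In one step the frozen mass (total $\alpha$) does not move, while free mass $m$ at a site $x$ splits into $m/2$ at each of $x\pm1$, changing its contribution from $mx^2$ to $m(x^2+1)$; summing, $M_{t+1}-M_t$ equals the total free mass, namely exactly $1-\alpha$, so $M_t=(1-\alpha)\,t$ for all $t$. The same bookkeeping for the fourth moment gives $\sum_x x^4\mu_t(x)=O(t^2)$. Hence $\tilde\mu_t\big([-R,R]^{c}\big)\le M_t/(R^2t)=(1-\alpha)/R^2$, so $\{\tilde\mu_t\}$ is tight, and the fourth-moment bound makes $x^2$ uniformly integrable; therefore every subsequential weak limit $\nu$ of $\tilde\mu_t$ is a probability measure with $\int x^2\,d\nu=1-\alpha$.

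\textbf{Step 2: identifying $\nu$.} Pass to a subsequence along which $\tilde\mu_{t_k}\rightharpoonup\nu$ and $\beta_{t_k}/\sqrt{t_k}\to\ell$. One step of the dynamics is an $O(t^{-1/2})$ perturbation once rescaled by $\sqrt t$, so $\nu$ must be a fixed point of the (self-similar) rescaled evolution. Concretely, one argues that (a) the frozen mass is confined to within $o(\sqrt t)$ of $\pm\beta_t$, so after rescaling it collapses onto the atoms $\tfrac\alpha2\delta_{\pm\ell}$, and (b) on the free region the discrete heat equation forces $\nu$ to be, at scale $\sqrt t$, a self-similar solution $\rho(x,t)=t^{-1/2}g(x/\sqrt t)$ of $\partial_t\rho=\tfrac12\partial_{xx}\rho$; solving $g''+\xi g'+g=0$ with $g$ even gives $g(\xi)=Ce^{-\xi^2/2}$. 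Thus
\[
\nu=C\,e^{-x^2/2}\,\mathbf{1}_{(-\ell,\ell)}(x)\,dx+\tfrac\alpha2\big(\delta_{-\ell}+\delta_{\ell}\big),
\]
and $C,\ell$ are pinned down by the total mass, $C\sqrt{2\pi}\,(2\Phi(\ell)-1)=1-\alpha$, and by Step~1: $\int x^2\,d\nu=1-\alpha$ reduces (one integration by parts, using the mass identity) to $\alpha\ell=2Ce^{-\ell^2/2}$. Eliminating $C$ leaves
\[
\frac{\alpha\ell}{2}\,\sqrt{2\pi}\;e^{\ell^2/2}\,\big(2\Phi(\ell)-1\big)=1-\alpha ,
\]
whose left side increases strictly from $0$ to $\infty$ on $(0,\infty)$, hence has a unique root, which must lie in the interval of Lemma~\ref{diffu1}. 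As this root is independent of the subsequence, $\beta_t/\sqrt t\to\ell_\alpha$, the root. (One also checks $\ell_\alpha\ne\Phi^{-1}(1-\alpha/2)$; the gap with the ``naive'' $\alpha/2$-quantile is precisely the effect of $\int x^2\,d\nu=1-\alpha<1$, as the abstract anticipates.)

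\textbf{The main obstacle.} The hard part is Step~2, and in fact even its premise — that $\tilde\mu_t$ converges at all (Question~\textbf{Q2}) — is open. Proving (a) and (b) amounts to a hydrodynamic-limit analysis of a Stefan-type free-boundary problem: the continuum heat-equation description degenerates exactly at $\pm\beta_t$, where bulk mass is repeatedly frozen and then thawed (as $\beta_t$ creeps past it), creating a boundary layer that carries all of the frozen mass. One must show this layer has width $o(\sqrt t)$ — heuristically $O(1)$, since $\beta_t$ advances one lattice unit only every $\Theta(\sqrt t)$ steps, so the currently-frozen mass was deposited within the last $\Theta(\sqrt t)$ steps — and that the Gaussian bulk profile matches the layer at the moving boundary. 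Controlling this boundary layer is also exactly what is needed to certify that the error in the rescaled fixed-point relation is genuinely $o(1)$; with that in hand, the rest of Step~2 is routine.
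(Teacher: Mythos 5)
There is a genuine gap, and you name it yourself: the statement is a \emph{conjecture} in the paper, and your proposal does not prove it either. The paper only establishes the two conditional implications: Conjecture \ref{a1} implies Conjecture \ref{a2} (Lemma \ref{strongdiff}, whose whole content is that, by Lemma \ref{twopoint}, any weak limit of $\tilde\mu_t$ must place mass at least $\alpha/2$ near every subsequential limit of $\beta_t/\sqrt t$ and mass $0$ beyond the smallest one, so two distinct subsequential limits are impossible), and conversely Conjecture \ref{a2} implies Conjecture \ref{a1} and identifies the limit (Theorem \ref{main}). Your Step 1 is correct and is exactly the paper's computation in the proof of Lemma \ref{diffu1} ($M_2(t)=(1-\alpha)t$, hence tightness of $\tilde\mu_t$). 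But Step 2 --- claims (a) and (b), that \emph{every} subsequential weak limit is a truncated Gaussian density plus two atoms at $\pm\ell$ --- is asserted, not proved, and it is precisely the open problem. The sentence ``one step of the dynamics is an $O(t^{-1/2})$ perturbation once rescaled, so $\nu$ must be a fixed point of the rescaled evolution'' carries no force as written: the rescaled map is time-inhomogeneous (the truncation point moves), and nothing you write excludes, say, a profile that oscillates at scale $\sqrt t$ while still satisfying the exact second-moment identity. Since you concede in your final paragraph that this step (equivalently, \textbf{Q2}) is open, the proposal is a program, not a proof, and in particular does not go beyond what the paper itself establishes.

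On the parts that can be compared, your route to the characterizing equation for $\ell_\alpha$ is genuinely different from the paper's. You first fix the functional form of the limit (by the unproven self-similarity) and then use only the total mass and the second moment to pin down the normalizing constant and $\ell$; the resulting equation agrees with the paper's \eqref{key}, $\ell_\alpha g(\ell_\alpha)=(1-\alpha)/\alpha$, equivalently $\frac{\alpha}{2}\ell_\alpha=\frac{(1-\alpha)e^{-\ell_\alpha^2/2}}{\sqrt{2\pi}\,\Phi([-\ell_\alpha,\ell_\alpha])}$, and your monotonicity argument for uniqueness of the root is the same as the paper's. The paper instead derives the full hierarchy of even moments from the exact discrete evolution \eqref{eq:momentchange}, obtains the recursion \eqref{eq:recursion} for $P_k(\ell_\alpha)$, and extracts the same equation from the inequalities $0\le P_k(\ell_\alpha)\le(1-\alpha)\ell_\alpha^{2k}$; this requires only Conjecture \ref{a2} as input, not the shape of the limit, and then recovers the shape a posteriori by the method of moments (Lemma \ref{moments}). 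Your second-moment shortcut is lighter, but only because it assumes the answer's functional form; if you want a result of the paper's strength, the moment-hierarchy route is what closes the argument once the boundary-layer control you describe is actually supplied.
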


\noindent
\begin{conj}\label{a1} Fix $\alpha \in (0,1)$. Then there exists a probability measure $\mu_{\infty}(\alpha)$ on $\R$ such that as $t \to \infty,$ 
$$\tilde \mu_{t}{{\stackrel{weak}{\implies}}} \mu_{\infty}(\alpha),$$ where ${{\stackrel{weak}{\implies}}}$ denotes weak convergence in the space of finite measures on $\mathbb{R}.$ 
\end{conj}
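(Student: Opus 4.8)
The plan is to separate the claim into tightness of $\{\tilde\mu_t\}$, which is cheap, and uniqueness of the subsequential limit, which carries all the content. For tightness I would first record the exact second-moment identity $\sum_{x} x^2\mu_t(x) = (1-\alpha)t$: freezing leaves the second moment of the frozen mass unchanged, while each unit of free mass at a site $x$ that splits to $x\pm 1$ raises its contribution from $x^2$ to $\tfrac12[(x+1)^2+(x-1)^2] = x^2+1$, so summing over the free mass $1-\alpha$ gives an increment of exactly $1-\alpha$ per step and, since $\mu_0=\delta_0$, the identity follows by induction. Consequently $\int y^2\,d\tilde\mu_t = (1-\alpha)$ for every $t$, so the family has uniformly bounded second moments and is tight by Chebyshev. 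Prokhorov then gives, along any subsequence, a weakly convergent sub-subsequence whose limit $\mu_\infty$ is a genuine probability measure (no mass escapes) with $\int y^2\,d\mu_\infty \le 1-\alpha$.

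Next I would show that \emph{every} subsequential limit solves the same continuum problem, so that the limit is forced. In the rescaled coordinate $y = x/\sqrt t$, one discrete heat step on the free region $\{|x|<\beta_t\}$ converges under parabolic scaling to $\partial_t u = \tfrac12\partial_{xx}u$; seeking a mass-preserving self-similar density $u(x,t) = t^{-1/2}g(x/\sqrt t)$ reduces this to the ODE $\tfrac12 g'' + \tfrac12(y g)' = 0$, whose symmetric integrable solutions are the standard Gaussians $g(y) = A e^{-y^2/2}$. By Lemma \ref{diffu1} the free boundary satisfies $\beta_t/\sqrt t\in(a,b)$, so along the convergent subsequence $\beta_t/\sqrt t \to \ell$ for some $\ell\in[a,b]$, and $\mu_\infty$ is this Gaussian profile truncated at $\pm\ell$ together with the $\alpha/2$ mass on each side deposited as a frozen tail beyond $\pm\ell$. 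The definition of $\beta_t$ passes to $\mu_\infty([\ell,\infty)) = \alpha/2$, and the flux of free mass across $\pm\ell$ must self-consistently equal the rate at which frozen mass is created there, which is a discrete Stefan-type boundary condition.

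Finally I would argue that these constraints pin down the triple $(\ell, A, \text{tail profile})$ uniquely. The normalization $\int d\mu_\infty = 1$, the moment value $\int y^2\,d\mu_\infty = 1-\alpha$, the quantile relation $\mu_\infty([\ell,\infty)) = \alpha/2$, and the Stefan flux condition at $\pm\ell$ form a closed system for the unknowns, and I would show it admits a unique solution; in particular the truncation point is \emph{not} the $\alpha/2$-quantile of the untruncated Gaussian but is instead fixed by the second-moment balance, matching the paper's assertion. Since every subsequential limit equals this unique $\mu_\infty$, the whole sequence $\tilde\mu_t$ converges weakly, which proves the conjecture and simultaneously yields Conjecture \ref{a2} with $\ell_\alpha = \ell$.

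The main obstacle is precisely the passage in the last two paragraphs from the discrete freezing rule to a well-posed continuum free-boundary problem. The frozen mass accumulates in a boundary layer near $\pm\beta_t$ that is invisible to the naive heat-equation limit, so one must prove that the rescaled frozen profile itself converges and that the flux condition genuinely closes the system, rather than merely characterizing a hypothetical limit. Controlling this boundary layer uniformly in $t$ is exactly the difficulty hidden behind the ``assuming existence'' hypothesis, and I expect it to require sharp quantitative control of $\mu_t$ near $\beta_t$ well beyond the moment identities that suffice for tightness.
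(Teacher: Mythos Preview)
The paper does not prove this statement: it is explicitly a conjecture, and Theorem~\ref{main} establishes it only under the extra hypothesis that $\lim_{t\to\infty}\beta_t/\sqrt t$ exists (Conjecture~\ref{a2}). Your tightness step is correct and matches the paper's second-moment identity \eqref{sm12}. The substantive part of your proposal---identifying every subsequential limit---is exactly what the paper leaves open, and the paper's conditional argument proceeds by moments (the recursion \eqref{eq:momenteq} and Lemma~\ref{thmq}) rather than by the PDE/Stefan picture you sketch.

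Your identification step has a genuine gap, which you effectively concede in the final paragraph. The parabolic-scaling heuristic that turns one discrete heat step into $\partial_t u=\tfrac12\partial_{xx}u$ and yields a truncated Gaussian is not a proof: weak convergence of $\tilde\mu_{t_k}$ does not by itself force the limit to satisfy any PDE, and making the flux/Stefan condition at the free boundary rigorous is the whole difficulty. Nor can you bypass the PDE by importing the paper's moment method into a subsequence argument: the key identity \eqref{evolut1} expresses $M_{2k}(t)/t^k$ as a Ces\`aro-type average of $M^{\nu}_{2k-2}(j)$ over \emph{all} $j<t$, and splitting $M^{\nu}_{2k-2}(j)=M_{2k-2}(j)-M^{f}_{2k-2}(j)$ requires control of $\beta_j/\sqrt j$ for every $j$, not merely along the subsequence $t_k$. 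This is precisely why the paper takes full-sequence convergence of $\beta_t/\sqrt t$ as a hypothesis rather than deducing it. Your outline correctly locates the obstacle but does not overcome it, so as written it is a plausible program rather than a proof.
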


That Conj.\ \ref{a1} implies Conj.\ \ref{a2} is the content of Lemma~\ref{strongdiff}. We now state our main result which shows that Conj.\ \ref{a2} implies Conj.\ \ref{a1} and identifies the limiting distribution, thus answering \textbf{Q3.}
To this end we need the following definition.
\begin{definition}Let $\Phi(\cdot)$ be the standard Gaussian measure on $\mathbb{R}.$ Also for any $q>0$ denote by $\Phi_{q}(\cdot),$ the probability measure on $\mathbb{R}$ which is  supported on $[-q.q]$ and  whose density is the standard Gaussian density restricted on the interval $[-q,q]$ and properly normalized to have integral $1$. 
\end{definition}

\begin{theorem}\label{main}

Assuming that $\displaystyle\lim_{t\to\infty} \frac{\beta_t}{\sqrt t}$ is a constant,  the following is true:
$$\tilde \mu_{t}{{\stackrel{weak}{\implies}}} \mu_{\infty}(\alpha),$$
where,  
$$\mu_{\infty}(\alpha)=\frac{\alpha}{2}\delta(-{q_{\alpha}})+(1-\alpha)\Phi_{q_{\alpha}}+\frac{\alpha}{2}\delta({q_{\alpha}}),$$ and $q_{\alpha}$ is the unique positive number such that:
$$\frac{\alpha}{2}q_{\alpha}=\frac{(1-\alpha)e^{-q_{\alpha}^2/2}}{\sqrt{2\pi}\Phi([-q_{\alpha},q_{\alpha}])}.$$ 
\end{theorem}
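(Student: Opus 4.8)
The plan is to combine two elementary exact identities with one genuinely analytic step.

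\textbf{Two exact inputs.} First, by induction on $t$ one gets the sharp support bound $\mathrm{supp}(\mu_t)\subseteq[-\beta_t-1,\beta_t+1]$: by definition of $\beta_t$ the right frozen packet of $\mu_t$ (the rightmost $\tfrac\alpha2$ mass) lies in $[\beta_t,\beta_t+1]$, the left one in $[-\beta_t-1,-\beta_t]$, and the free $(1-\alpha)$ mass lies in $[-\beta_t,\beta_t]$, so one diffusion step keeps everything inside $[-\beta_t-1,\beta_t+1]\subseteq[-\beta_{t+1}-1,\beta_{t+1}+1]$, using $\beta_{t+1}\ge\beta_t$ (which holds because $\mu_{t+1}([\beta_t,\infty))\ge\mu_t([\beta_t,\infty))\ge\tfrac\alpha2$). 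Consequently $\tilde\mu_t$ is supported in $[-(b+1),b+1]$ for all $t$, hence tight, and — using the hypothesis $\beta_t/\sqrt t\to\ell$ (with $\ell\in[a,b]$ by Lemma \ref{diffu1}) — the two frozen packets, rescaled, converge to $\tfrac\alpha2\delta_{-\ell}$ and $\tfrac\alpha2\delta_{\ell}$. Second, one step of the discrete heat equation increases $\sum_x x^2\mu(x)$ by exactly the diffusing mass, since $\tfrac12((y+1)^2+(y-1)^2)=y^2+1$; as only the free mass diffuses, $\sum_x x^2\mu_{t+1}(x)=\sum_x x^2\mu_t(x)+(1-\alpha)$, hence $\sum_x x^2\mu_t(x)=(1-\alpha)t$, i.e.\ $\int y^2\,d\tilde\mu_t(y)=1-\alpha$ for every $t\ge1$.

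\textbf{Identifying subsequential limits.} Let $\mu_\infty$ be any weak subsequential limit of $\tilde\mu_t$. The frozen-packet observation already forces $\mu_\infty=\tfrac\alpha2\delta_{-\ell}+\tfrac\alpha2\delta_\ell+\nu$, where $\nu$ has mass $1-\alpha$ and is supported on $[-\ell,\ell]$. To show $\nu=(1-\alpha)\Phi_\ell$, observe that on the interior — say on $\{|x|\le(\ell-\delta)\sqrt t\}$ for fixed $\delta>0$ and $t$ large — no site carries frozen mass, since the frozen mass sits within $O(1)$ of $\pm\beta_t\sim\pm\ell\sqrt t$; hence there $\mu_{t+1}(x)=\tfrac12(\mu_t(x-1)+\mu_t(x+1))$ exactly. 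By parabolic scaling and interior regularity for the discrete heat equation, $\nu$ has a smooth density $h$ on $(-\ell,\ell)$, and the self-similar structure of the rescaling (one time-step is diffusion followed by the dilation $x\mapsto x\sqrt{t/(t+1)}$) forces $h$ to be a stationary profile of the associated Ornstein--Uhlenbeck-type operator, i.e.\ $\tfrac12 h''+\tfrac12(xh)'=0$ on $(-\ell,\ell)$, equivalently $(h'+xh)'=0$. Since $\mu_t$, and hence $h$, is even, $h'+xh$ is constant and vanishes at $0$, so $h'=-xh$ and $h(x)=Ae^{-x^2/2}$; the normalization $\int_{-\ell}^\ell h=1-\alpha$ fixes $A$, giving $\nu=(1-\alpha)\Phi_\ell$ and $\mu_\infty=\tfrac\alpha2\delta_{-\ell}+(1-\alpha)\Phi_\ell+\tfrac\alpha2\delta_\ell$.

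\textbf{Pinning down $\ell$ and concluding.} Because $\tilde\mu_t$ is uniformly compactly supported, $\int y^2\,d\tilde\mu_t\to\int y^2\,d\mu_\infty$ along the subsequence, so $\int y^2\,d\mu_\infty=1-\alpha$ by the second-moment identity. Evaluating, $\alpha\ell^2+(1-\alpha)\frac{\int_{-\ell}^\ell y^2 e^{-y^2/2}dy}{\int_{-\ell}^\ell e^{-y^2/2}dy}=1-\alpha$; integrating by parts $\int_{-\ell}^\ell y^2e^{-y^2/2}dy=-2\ell e^{-\ell^2/2}+\int_{-\ell}^\ell e^{-y^2/2}dy$ turns this into $\frac\alpha2\ell=\frac{(1-\alpha)e^{-\ell^2/2}}{\sqrt{2\pi}\,\Phi([-\ell,\ell])}$. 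The left side is continuous and strictly increasing from $0$ to $\infty$ on $(0,\infty)$, the right side continuous and strictly decreasing from $\infty$ to $0$, so the equation has a unique positive root, namely $q_\alpha$; thus $\ell=q_\alpha$ and $\mu_\infty=\mu_\infty(\alpha)$. Since every subsequential limit equals $\mu_\infty(\alpha)$ and $\tilde\mu_t$ is tight, $\tilde\mu_t\Rightarrow\mu_\infty(\alpha)$.

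\textbf{Main obstacle.} The delicate step is the self-similar analysis of the free mass in the second paragraph above: one must control the $O(1/\sqrt t)$ transfer of mass between the free and frozen regions near $\pm\beta_t$ so that it does not contaminate the interior profile, establish interior compactness and regularity of the rescaled free mass, and make rigorous the passage from the discrete one-step recursion (diffusion followed by dilation by $\sqrt{t/(t+1)}$) to the statement that any subsequential limit is annihilated by $\tfrac12\partial_{xx}+\tfrac12\partial_x(x\,\cdot\,)$ on $(-\ell,\ell)$. By contrast the support bound and the second-moment identity are exact and elementary, and they alone reduce the whole problem to identifying the shape of the free part.
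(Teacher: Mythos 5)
There is a genuine gap at the heart of your argument: the identification of the free part of a subsequential limit as $(1-\alpha)\Phi_\ell$. You assert that ``parabolic scaling and interior regularity'' plus ``the self-similar structure of the rescaling'' force the density $h$ to satisfy $\tfrac12 h''+\tfrac12(xh)'=0$, but this is exactly the hard analytic content of the theorem, and nothing in your write-up supplies it. A weak subsequential limit of $\tilde\mu_{t_j}$ is not automatically invariant under the limiting dynamics: one diffusion step followed by the dilation $x\mapsto x\sqrt{t/(t+1)}$ perturbs the rescaled profile only at order $O(1/t)$, so running finitely many steps tells you nothing in the limit, while running $\epsilon t$ further steps lands you at a different subsequential limit. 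Turning ``the generator annihilates the limit'' into a theorem requires a quantitative convergence-to-equilibrium estimate for the discrete heat semigroup in the growing frozen-boundary domain (a contraction, spectral-gap, or entropy argument), together with control of the mass exchange near $\pm\beta_t$ — none of which is sketched. This matters doubly because your final step, pinning down $\ell$ from the second-moment identity $\int y^2\,d\mu_\infty=1-\alpha$, only yields the equation for $q_\alpha$ once you already know the free part is a truncated Gaussian; with the shape unknown you have one equation and two unknowns. Notably, the reflected Ornstein--Uhlenbeck picture you invoke is precisely what the paper relegates to its concluding remarks as a heuristic ``possible approach,'' not a proof.

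The paper's actual proof circumvents this entirely by the moment method. Using the exact one-step identity $M_{2k}(t+1)-M_{2k}(t)=\sum_{i=1}^{k}\binom{2k}{2k-2i}M^{\nu}_{2k-2i}(t)$, Ces\`aro summation, and the hypothesis $\beta_t/\sqrt t\to\ell$ (which gives the frozen moments $M^f_{2k}(t)/t^k\to\alpha\ell^{2k}$), it shows inductively that every rescaled moment converges and satisfies the recursion $M_{2k}=M^\nu_{2k}+\alpha\ell^{2k}=(2k-1)M^\nu_{2k-2}$. The elementary constraints $0\le M^\nu_{2k}\le(1-\alpha)\ell^{2k}$ (free mass $1-\alpha$ supported in $[-\beta_t,\beta_t]$), after dividing by $(2k-1)!!$ and letting $k\to\infty$, force the single functional equation $\ell g(\ell)=(1-\alpha)/\alpha$ with $g(x)=e^{x^2/2}\int_0^x e^{-y^2/2}dy$, whose unique positive root is $q_\alpha$ — this replaces your single second-moment equation with an infinite family of inequalities that determine both $\ell$ and all moments at once. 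An integration by parts (essentially the one you perform) then shows the moments of $(1-\alpha)\Phi_{q_\alpha}$ satisfy the same recursion, and moment determinacy (Billingsley, Theorems 30.1--30.2) upgrades moment convergence to weak convergence. If you want to salvage your outline, either supply the missing stationarity/regularity argument for the rescaled free profile, or switch to the moment route, where every step is an exact algebraic identity.
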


\begin{rem}\label{uniqsol} It is easy to show (see Lemma \ref{strongdiff}) that the above result implies that $$\displaystyle\lim_{t\to\infty} \frac{\beta_t}{\sqrt t}=q_{\alpha}.$$  Thus observe that by the above result,  just assuming that the boundary location properly scaled converges to a constant determines the value of the constant. This is a consequence of uniqueness of the root of a certain functional equation discussed in detail in Section \ref{pom}. 
\end{rem}
\noindent
\noindent

\subsection{Formal definitions}\label{fd1}
Let \CC-$\alpha:=\{\mu_0,\mu_1, \ldots\}$: where for each $t=0,1,\ldots,$ $\mu_t$ is a probability distribution on $\mathbb{Z}$.  For brevity we suppress the dependence on $\alpha$ in the notation since there is no scope of confusion as $\alpha$ will remain fixed throughout any argument.


\noindent 

Let $\mu_0 \equiv \delta(0)$ be the delta function at $0.$
By construction $\mu_t$ will be symmetric for all $t$.  
As described in Section \ref{def1} each $\mu_t$ contains a ``constrained/frozen" part and a ``free" part. Let the free mass and the frozen mass  be denoted by the mass distributions $\nu_t$ and $f_t$ respectively.\\
\noindent
Recall the boundary of the process, 
\begin{equation}\label{bdrydef}
\beta_t=\sup \left\{x\in \Z: \mu_t\left([x,\infty)\right) \ge \frac{\alpha}{2}\right\}.
\end{equation}
Then for all $y\ge 0,$ \begin{equation}f_t(y):=\left\{\begin{array}{cc}\mu_{t}(y) & y> \beta_{t} \\
\frac{\alpha}{2}- \displaystyle{\sum_{z> \beta_t}} \mu_t(z)& y=\beta_t\\
0 \quad & \rm {otherwise}.
\end{array}
\right.
\end{equation}
For $y<0$ let $f_{t}(y):=f_t(-y).$
Thus $f_t$ is the extreme $\alpha/2$ mass on both sides of the origin. Define the free mass to be $\nu_t:=\mu_t -f_t.$
With the above notation the heat diffusion is described by 
\begin{equation}\label{evolve}
\mu_{t+1}(x)=\frac{\nu_t(x-1)+\nu_t(x+1)}{2}+f_t(x).
\end{equation}

Recall Lemma \ref{diffu1}, which implies the diffusive nature of the boundary:
\\

\noindent
\textbf{Lemma 1.}
For every $\alpha \in(0,1)$ there exist constants $a,b>0$ such that $$a\sqrt{t} < \beta_{t} < b\sqrt{t} \mbox{ }, \forall \,t. $$

This result implies that in order to obtain any limiting statement about the measures $\mu_t$, one has to scale space down by $\sqrt{t}.$ 

\noindent

The proof of the lemma appears later. Let us first prove that the frozen mass $f_t$ cannot be supported over many points.

\begin{lemma}\label{twopoint}For all $t$, the frozen mass at time $t$, $f_t$, is supported on at most two points on each side of the origin, i.e., for all $y\in \mathbb{Z}$ such that $|y|\ge \beta_{t}+2,$ we have $f_t(y)=0.$ 
\end{lemma}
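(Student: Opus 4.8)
The plan is to track how the support of the frozen mass can grow from one time step to the next. Fix $t$ and recall that $f_t$ is, by definition, the extreme $\alpha/2$ mass on each side; on the nonnegative side it lives on the set $\{y : y \geq \beta_t\}$, and the claim is that in fact it lives only on $\{\beta_t, \beta_t+1\}$. I would argue by induction on $t$, the base case $t=0$ being trivial since $\mu_0=\delta(0)$. So assume $f_t$ is supported on at most $\{\beta_t,\beta_t+1\}$ on the positive side (and symmetrically on the negative side); equivalently, $\mu_t(y)=0$ for all $y \geq \beta_t+2$. I then want to deduce the same statement at time $t+1$, i.e. that $\mu_{t+1}(y)=0$ for $y \geq \beta_{t+1}+2$.

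The key step is to use the evolution equation \eqref{evolve}: $\mu_{t+1}(x) = \tfrac12(\nu_t(x-1)+\nu_t(x+1)) + f_t(x)$. Since $\nu_t \le \mu_t$ and $\mu_t$ vanishes on $[\beta_t+2,\infty)$, the free part $\nu_t$ also vanishes there, so $\tfrac12(\nu_t(x-1)+\nu_t(x+1))$ is supported on $x \le \beta_t+2$, and at $x=\beta_t+2$ it only gets a contribution from $\nu_t(\beta_t+1)$. Meanwhile $f_t(x)$ is supported on $\{\beta_t,\beta_t+1\}$ by the inductive hypothesis. Hence $\mu_{t+1}(y)=0$ for all $y \geq \beta_t+3$, and $\mu_{t+1}(\beta_t+2) = \tfrac12 \nu_t(\beta_t+1)$. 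So $\mu_{t+1}$ is supported on $(-\infty,\beta_t+2]$ on the positive side. To finish, I need $\beta_{t+1} \geq \beta_t$, i.e. the boundary does not move inward; this should follow because the frozen mass at $\beta_t$ and beyond stays put, so $\mu_{t+1}([\beta_t,\infty)) \geq f_t([\beta_t,\infty)) = \alpha/2$ (using symmetry and the definition of $f_t$), which by \eqref{bdrydef} forces $\beta_{t+1} \geq \beta_t$. Combining, $\mu_{t+1}(y)=0$ for $y \geq \beta_t+3 \geq \beta_{t+1}+3$...

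— but this only gives support on at most \emph{three} points, so I need to be more careful. The refinement is that the mass at $\beta_t+2$ is new free mass pushed out from $\beta_t+1$, and I should check it does not exceed what the boundary redefinition absorbs: since $\mu_{t+1}$ gains mass at $\beta_t+2$ only from $\nu_t(\beta_t+1)$, which was part of the \emph{free} mass at time $t$ and hence was at a site $\le \beta_t+1$, the total mass strictly beyond $\beta_{t+1}$ plus the partial mass at $\beta_{t+1}$ must still sum to exactly $\alpha/2$; a short counting argument (comparing the mass in $[\beta_t+1,\infty)$ before and after, which can only increase by the amount that diffuses from $\beta_t$ rightward, itself at most half of $\nu_t(\beta_t)$) shows that $\beta_{t+1} \in \{\beta_t, \beta_t+1\}$, so that $\beta_t + 2 \le \beta_{t+1}+2$ as well and indeed $\mu_{t+1}$ vanishes on $[\beta_{t+1}+2,\infty)$. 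The main obstacle is precisely this last bookkeeping: showing the boundary advances by at most one site per step, which is what keeps the two-point structure from degrading to three points. I would handle it by writing out $\mu_{t+1}([\beta_t+1,\infty))$ explicitly via \eqref{evolve} and bounding it by $\tfrac{\alpha}{2}$ minus the frozen mass that remained at $\beta_t$, concluding that the $(\alpha/2)$-quantile cannot jump past $\beta_t+1$.
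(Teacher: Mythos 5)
There is a genuine gap, and it stems from overlooking a structural fact that the paper's proof uses (implicitly) as its key step: by the very definition of $f_t$, \emph{all} mass at sites strictly beyond $\beta_t$ is frozen, i.e.\ $f_t(y)=\mu_t(y)$ for $y>\beta_t$, hence $\nu_t(y)=0$ for all $|y|>\beta_t$. In particular $\nu_t(\beta_t+1)=0$, so the diffused part $\tfrac12(\nu_t(x-1)+\nu_t(x+1))$ is supported on $[-\beta_t-1,\beta_t+1]$ and $\mu_{t+1}(\beta_t+2)=\tfrac12\nu_t(\beta_t+1)=0$ outright. Combined with the induction hypothesis (which controls the support of $f_t$) and the monotonicity $\beta_{t+1}\ge\beta_t$ — which you do argue correctly — this immediately gives $\mu_{t+1}(y)=0$ for $|y|\ge\beta_{t+1}+2$. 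That is essentially the paper's entire proof.

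Because you only use the weaker bound $\nu_t\le\mu_t$, you are left fighting a phantom third point at $\beta_t+2$, and your proposed patch does not close the argument. Showing $\beta_{t+1}\in\{\beta_t,\beta_t+1\}$ is not enough: in the case $\beta_{t+1}=\beta_t$ you would still need $\mu_{t+1}(\beta_t+2)=0$, which in your accounting equals $\tfrac12\nu_t(\beta_t+1)$ and which you never show to vanish (and in the other case $\beta_{t+1}=\beta_t+1$ the inequality $\beta_t+2\le\beta_{t+1}+1<\beta_{t+1}+2$ means the point $\beta_t+2$ is covered, but your final sentence conflates the two cases). The "bookkeeping" you identify as the main obstacle is in fact unnecessary once you observe that the free mass never lives strictly beyond $\beta_t$ in the first place; I would recommend restating the definition of $\nu_t=\mu_t-f_t$ and reading off its support before invoking \eqref{evolve}.
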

\begin{proof}

The lemma follows by induction. Assume for all $k\le t$, for all $y$ such that $|y|\ge \beta_{k}+2,$ we have $\mu_k(y)=f_k(y)=0.$ The base case $t=0$ is easy to check.
Now observe that by \eqref{evolve} and the above induction hypothesis,
\begin{equation}\label{bdrms1}
\mu_{t+1}(y)=0,
\end{equation}
for all $|y|\ge \beta_{t}+2.$
Also notice that by \eqref{evolve} it easily follows that $\beta_t$ is a non-decreasing function of $t.$
Thus clearly for all $y,$ with $|y|\ge \beta_{t+1}+2\ge  \beta_{t}+2,$
$$
\mu_{t+1}(y)=0.
$$
Hence we are done by induction.

\end{proof} 

\noindent
We now return to the proof of the diffusive nature of the boundary of the process $\beta_t$.
\\\\
\noindent
\textbf{Proof of Lemma \ref{diffu1}.} We consider the second moment of the mass distribution $\mu_t$, which we denote as $M_2(t) := \displaystyle{\sum_{x\in \mathbb{Z}}} \mu_t(x) x^2$. This is at most $(\beta_t+1)^2$ since $\mu_t$ is supported on $[-\beta_t-1, \beta_t+1]$ by Lemma \ref{twopoint}. It is also at least $\alpha{\beta_t}^2$ since there exists  mass $\alpha$ which is at a distance at least  $\beta_t$ from the origin. Now we observe how the second moment of the mass distribution evolves over time.
Suppose a free mass $m$ at $x$ splits and moves to $x-1$ and $x+1$. Then the increase in the second moment is $$\frac{m}{2}((x+1)^2+(x-1)^2)-mx^2=m.$$ Since at every time step exactly $1-\alpha$ mass is moving, the net change in the second moment at every step is $1-\alpha$. So at time $t$ the second moment is exactly 
\begin{equation}\label{sm12}
t(1-\alpha).
\end{equation}
Hence $\alpha{\beta_t}^2<t(1-\alpha)<(\beta_t+1)^2,$ and we are done. 
\qed

%
%
%

\noindent
We next prove Conjecture \ref{a2} (a stronger version of Lemma \ref{diffu1}) assuming Conjecture \ref{a1}.
\begin{lemma}\label{strongdiff} 
If Conjecture \ref{a1} holds, then so does Conjecture \ref{a2}, i.e., for every $\alpha \in (0,1),$ there exists $\ell_{\alpha}>0,$ such that $$\displaystyle{\lim_{t \to \infty} \frac{\beta_t}{\sqrt{t}}}=\ell_{\alpha}.$$
\end{lemma}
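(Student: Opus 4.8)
The plan is to deduce from the hypothesis $\tilde\mu_t\Rightarrow\mu_\infty(\alpha)$ that every subsequential limit of the rescaled boundary $\tilde\beta_t:=\beta_t/\sqrt t$ is forced to equal the right endpoint of $\mathrm{supp}(\mu_\infty(\alpha))$; since that endpoint is an intrinsic property of the (fixed) limiting measure, all subsequential limits coincide and $\tilde\beta_t$ converges, positivity of the limit coming for free from the second-moment identity already used to prove Lemma~\ref{diffu1}. Concretely, the proof of Lemma~\ref{diffu1} gives $\alpha\beta_t^2<t(1-\alpha)<(\beta_t+1)^2$, hence $\sqrt{1-\alpha}-t^{-1/2}<\tilde\beta_t<\sqrt{(1-\alpha)/\alpha}$ for all $t\ge1$; so $(\tilde\beta_t)$ eventually lies in a fixed compact subset of $(0,\infty)$, has subsequential limits, and each of them lies in $[\sqrt{1-\alpha},\sqrt{(1-\alpha)/\alpha}]$.

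Next I would record what a subsequential limit $q=\lim_k\tilde\beta_{t_k}$ "sees" of $\mu_\infty:=\mu_\infty(\alpha)$. By Lemma~\ref{twopoint}, $\mu_{t_k}$, and hence $\tilde\mu_{t_k}$, is supported within distance $1/\sqrt{t_k}$ of $[-q,q]$ (up to an $o(1)$ shift of the endpoints), so for any $\delta>0$ the measure $\tilde\mu_{t_k}$ assigns mass $0$ to the open set $\R\setminus[-q-\delta,q+\delta]$ for large $k$; the Portmanteau inequality for open sets then gives $\mu_\infty(\R\setminus[-q-\delta,q+\delta])=0$, and letting $\delta\downarrow0$ yields $\mathrm{supp}(\mu_\infty)\subseteq[-q,q]$. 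On the other hand, the definition of $\beta_t$ gives $\tilde\mu_{t_k}\big([\tilde\beta_{t_k},\infty)\big)\ge\alpha/2$; comparing with $[q-\varepsilon,\infty)$ for small $\varepsilon>0$ and applying the Portmanteau inequality for the closed set $[q-\varepsilon,\infty)$ gives $\mu_\infty\big([q-\varepsilon,\infty)\big)\ge\alpha/2$, and letting $\varepsilon\downarrow0$ gives $\mu_\infty\big([q,\infty)\big)\ge\alpha/2$. (Together these say $\mu_\infty$ has an atom of mass at least $\alpha/2$ at $q$, consistently with Theorem~\ref{main} and Remark~\ref{uniqsol}.)

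Uniqueness of the subsequential limit is now immediate: if $q_1<q_2$ were both subsequential limits of $\tilde\beta_t$, then the support statement applied along a subsequence realizing $q_1$ forces $\mathrm{supp}(\mu_\infty)\subseteq[-q_1,q_1]$, so $\mu_\infty\big([q_2,\infty)\big)=0$, while the mass statement applied along a subsequence realizing $q_2$ forces $\mu_\infty\big([q_2,\infty)\big)\ge\alpha/2>0$ — a contradiction. Hence $\tilde\beta_t$ has a unique subsequential limit $\ell_\alpha$, i.e.\ $\tilde\beta_t\to\ell_\alpha$, and by the first paragraph $\ell_\alpha\ge\sqrt{1-\alpha}>0$, which is exactly Conjecture~\ref{a2}.

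There is no deep obstacle here: the entire content is in handling the lattice discreteness — the up-to-one-site slack in the support of $\mu_t$ around $\beta_t$, which is precisely what Lemma~\ref{twopoint} controls — so that weak convergence can transport the quantile-type characterisation of $\beta_t$ to the limit. The one point that requires genuine care is that $\tilde\beta_t$ tracks the \emph{edge} of $\mathrm{supp}(\mu_\infty)$ rather than an interior quantile of $\mu_\infty$ (an interior quantile could fail to be unique if $\mu_\infty$ had a flat stretch), and this is exactly what the combination of the support statement and the atom statement in the second paragraph pins down; in carrying it out one must keep the two Portmanteau inequalities pointing the right way (open versus closed sets) and take the $\varepsilon$- and $\delta$-approximations with the correct monotonicity.
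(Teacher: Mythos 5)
Your proof is correct and follows essentially the same route as the paper's: both arguments use the boundedness of $\beta_t/\sqrt{t}$ from Lemma~\ref{diffu1} to extract two subsequential limits $q_1<q_2$ and then derive a contradiction because one subsequence forces $\mu_\infty(\alpha)$ to vanish near $q_2$ (via the support bound from Lemma~\ref{twopoint}) while the other forces mass at least $\alpha/2$ there. Your write-up is simply a more explicit version, spelling out the Portmanteau inequalities that the paper leaves implicit.
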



\begin{proof}Fix $\alpha \in (0,1).$  From Lemma \ref{diffu1} we know that $\{\beta_t/\sqrt{t}\}$ is bounded. Hence, if $\beta_t/\sqrt{t}$ does not converge, there exists  two subsequences $\{s_1,s_2,\ldots\}$ and $\{t_1,t_2,\ldots\}$ such that $$\lim_{i \to \infty}\beta_{s_i}/\sqrt{s_i}= q_1\mbox{ and   }\, \lim_{j \to \infty}\beta_{t_j}/\sqrt{t_j}\rightarrow q_2,$$  for some $q_2,q_1>0$ such that $q_2-q_1:=\dd>0$.
Recall $\mu_{\infty}(\alpha)$ from Conjecture \ref{a1}. Now  by hypothesis, $$\lim_{i\to \infty}\tilde \mu_{s_i} {\stackrel{weak}{\implies}} \mu_{\infty}(\alpha),\quad \lim_{j\to \infty}\tilde \mu_{t_j} {\stackrel{weak}{\implies}} \mu_{\infty}(\alpha).$$ 


This yields a contradiction since the first relation implies $\mu_{\infty}(\alpha)$ assigns mass $0$ to the interval $(q_2-\frac{\dd}{2},q_2+\frac{\dd}{2})$ while the second one implies (by Lemma \ref{twopoint}) that it assigns mass at least $\frac{\alpha}{2}$ to that interval. 
\end{proof}

\section{Proof of Theorem \ref{main}}\label{pom}
The proof follows by observing the moment evolutions of the mass distributions $\mu_{t}$ and using the moment method. The proof is split into several lemmas. Also for notational simplicity we will drop the dependence on $\alpha$ and denote $\mu_{\infty}(\alpha)$ and $q_{\alpha}$ by $\mu$ and $q$ respectively since $\alpha $ will stay fixed in any argument. 
Thus \begin{equation}\label{cand123}
 \mu:=\mu_{\infty}(\alpha)=\frac{\alpha}{2}\delta(-{q})+(1-\alpha)\Phi_{q}+\frac{\alpha}{2}\delta({q}).
\end{equation}
Also denote the $k^{th}$ moments of $\mu_t$ as $M_{k}(t)$. We now make some simple observations which are consequences of the previously stated lemmas.
Recall the free and frozen mass distributions $\nu_{t}$ and $f_{t}.$ We denote the $k^{th}$ moments of the measures $\nu_t$ (the free mass at time $t$), $f_t$ (the frozen mass at time $t$), by $M^{\nu}_k(t)$ and $M^{f}_{k}(t)$ respectively. Also define $\tilde f_t$ and $\tilde \nu_t$ similarly to $\tilde \mu_t$ in \eqref{pushfow1}.
\noindent
Assuming Conjecture \ref{a2}, it follows from Lemma \ref{twopoint} that,
\begin{equation}\label{frozconv}
\tilde f_{t} {\stackrel{weak}{\implies}}f 
\end{equation}
where $f:=\frac{\alpha}{2}\delta(-\ell_{\alpha})+\frac{\alpha}{2}\delta(\ell_{\alpha}),$ and $\ell_{\alpha}$ appears in the statement of Conj \ref{a2}. 
This implies that \begin{align}\label{mcon1}
\frac{M^{f}_{k}(t)}{t^{k/2}}=\left\{ \begin{array}{cc}
0,& k \mbox{ odd} \\
\alpha \ell_{\alpha}^{k}(1+o(1)), & k \mbox{ even}
\end{array}
\right.
\end{align}
where $o(1)$ goes to $0$ as $t$ goes to infinity.\\\\
\noindent 
The proof of  Theorem \ref{main} is in two steps: first we show that $\ell_{\alpha}=q$  and then show that  $\tilde \nu_{t}$ converges weakly to the  part of $\mu$ which is absolutely continuous with respect to the Lebesgue measure. Clearly the above two results combined imply Theorem \ref{main}.
\\

\noindent
As mentioned this is done by observing the moment sequence $M_{k}(t).$ Now notice owing to symmetry of the measures $\mu_t$ for any $t$, $M_{2k+1}(t)=0$ for all non-negative integers $k.$\\
Thus it suffices  to consider $M_{2k}(t)$ for some non-negative integer $k$. We begin by observing that at any time $t$ the change in the moment $M_{2k}(t+1) - M_{2k}(t),$ is caused by the movement of the free mass $\nu_t$. The change caused by a mass $m$ moving at a site $x$ (already argued in the proof of Lemma \ref{diffu1} for $k=1$) is 
\begin{equation}\label{com1}
\frac{m((x+1)^{2k}+(x-1)^{2k})}{2}-mx^{2k}= m\left[\sum_{i=1}^{k} {2k \choose 2k-2i} x^{2k-2i}      \right].
\end{equation}
\noindent
Now summing over $x$ we get that,
\begin{equation} 
M_{2k}(t+1) - M_{2k}(t)=  \sum_{i=1}^{k}   {2k \choose 2k-2i} M^{\nu}_{2k-2i}(t).
\label{eq:momentchange}
\end{equation}
Notice that the moments of the free mass distribution $\nu_t$ appear on the RHS since $m$ in \eqref{com1} was the free mass at a site $x$. Now using \eqref{eq:momentchange} we sum $M_{2k}(j+1) - M_{2k}(j)$  over $0\le j \le t-1 $  and normalize by $t^{k}$  to get
\begin{equation} 
\label{evolut1}
\frac{M_{2k}(t)}{t^{k}} = \sum_{j=0}^{t-1} \left[ \sum_{i=1}^{k} {2k \choose 2k-2i} M^{\nu}_{2k-2i}(j) \frac{1}{t^k}   \right].
\end{equation}
Recall that by Lemma \ref{diffu1}, for any $k\ge 1,$ $M_{2k-2}^{\nu}(j)$ is $O(j^{k-1})$.
Moreover, the above equation allows us to make the following observation: \\
\noindent 
\textbf{Claim.} Assume \eqref{mcon1} holds. Then for any $k\ge 1$, the existence of $\displaystyle\lim_{j \to \infty} \frac{M_{2k-2}^{\nu}(j)}{j^{k-1}}$  implies existence of
$\displaystyle\lim_{j \to \infty} \frac{M_{2k}^{\nu}(j)}{j^{k}}$.\\\\
\noindent
\textit{Proof of claim.} Notice that by Lemma \ref{diffu1}, $M_{2k-\ell}^{\nu}(j) =O(j^{k-2})$ for any $\ell \le 4$.
Also let $$\lim_{j\to \infty}\frac{M_{2k-2}^{\nu}(j)}{j^{k-1}}=M^{\nu}_{2k-2},$$ which exists by hypothesis.
  
Thus using \eqref{evolut1} and the standard fact that $$\lim_{t \to \infty}\sum_{j=0}^{t-1}  \frac{j^{k-1}}{t^{k-1}} \frac{1}{t}= \int_{0}^{1} x^{k-1} dx = \frac{1}{k}$$ we get 
$$\sum_{j=1}^{t-1} \left[ \sum_{i=1}^{k} {2k \choose 2k-2i} \frac{M^{\nu}_{2k-2i}(j)  }{t^k}  \right]
=(2k-1)M^{\nu}_{2k-2}+o(1)+O\left(\frac{1}{t}\right).$$ 
Thus 
\begin{equation}
\label{lim}
\displaystyle\lim_{t\to\infty} \frac{M_{2k}(t)}{t^{k}} = (2k-1)M^{\nu}_{2k-2}
\end{equation}
\noindent
and since 
\begin{equation}\label{decomp1}
M_{2k}(t)=M^{\nu}_{2k}(t)+M^{f}_{2k}(t),
\end{equation}
we are done by \eqref{mcon1}.

\qed
 \\\\

\noindent
Using the above claim, the fact that $\displaystyle\lim_{t\to \infty }\frac{M_{k}(t)}{t^{k/2}}$ and hence, $\displaystyle\lim_{t\to \infty }\frac{M^{\nu}_{k}(t)}{t^{k/2}}$ (by  \eqref{decomp1} and \eqref{mcon1}) exists for all $k$, follows from the fact that $\frac{M_2(t)}{t}=(1-\alpha)$ (see \eqref{sm12}). Let us call the limits $M_{k}$ and $M^{\nu}_{k}$ respectively.
\\\\
\noindent
Thus we have \begin{equation}\label{eq:momenteq}
M_{2k}=M^{\nu}_{2k}+\alpha \ell_{\alpha}^{2k}=(2k-1)M^{\nu}_{2k-2},
\end{equation}
where the first equality is by \eqref{decomp1} and \eqref{mcon1} and the second by \eqref{lim}.
For $k=1$ we get  $${\alpha \ell_{\alpha}^2}+M^{\nu}_2=1-\alpha.$$
Notice that this implies that for all $k$, $M^{\nu}_{2k}$ can be expressed in terms of a polynomial in $\ell_{\alpha}$ of degree $2k$, which we denote as $P_{k}(\ell_{\alpha})$. Then, by \eqref{eq:momenteq} the polynomials $P_k$ satisfy the following recurrence relation:
\begin{eqnarray}\label{eq:recursion}
P_k(\ell_{\alpha}) &=&(2k-1)P_{k-1}(\ell_{\alpha})-\alpha \ell_{\alpha}^{2k}\\
\nonumber
P_{0}& =& 1-\alpha.
\end{eqnarray} 
By definition, we have
\begin{equation}\label{conv12}
P_k(\ell_{\alpha})=M^{\nu}_{2k}=\lim_{t\to \infty} \frac{M^{\nu}_{2k}(t)}{t^{k}}=\lim_{t\to \infty} \frac{\displaystyle{\sum_{-\beta_t\le x\le \beta_t}}x^{2k}\nu_{t}(x)}{t^{k}}.
\end{equation} 
Thus assuming Conj. \ref{a2} and the fact that $\displaystyle{\sum_{-\beta_t\le x\le \beta_t}}\nu_{t}(x) =1-\alpha$ for all $t$, we get the following family of inequalities,
\begin{equation}\label{eq:ineq}
0 \le P_k(\ell_{\alpha})\le (1-\alpha)\ell_{\alpha}^{2k}\,\mbox{   }\forall\,k\ge0.
\end{equation}
We next show that the above inequalities are true only if $\ell_{\alpha}=q$ where $q$ appears in \eqref{cand123}.
\begin{lemma}\label{thmq}
The inequalities in (\ref{eq:ineq}) are satisfied by the unique number $\ell_{\alpha}$ such that 
$$\frac{\alpha}{2}\ell_{\alpha}=\frac{(1-\alpha)e^{-\ell_{\alpha}^2/2}}{\sqrt{2\pi}\Phi([-\ell_{\alpha},\ell_{\alpha}])}$$ where $\Phi(\cdot)$ is the standard Gaussian measure.
\end{lemma}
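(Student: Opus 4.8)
The plan is to use the recurrence \eqref{eq:recursion} to solve $P_k(\ell)$ in closed form, and then analyze the large-$k$ asymptotics of the two bounds in \eqref{eq:ineq} to pin down $\ell = \ell_\alpha$. First, unrolling the linear recurrence $P_k = (2k-1)P_{k-1} - \alpha \ell^{2k}$ with $P_0 = 1-\alpha$ gives
$$
P_k(\ell) = (2k-1)!!\left[(1-\alpha) - \alpha \sum_{j=1}^{k} \frac{\ell^{2j}}{(2j-1)!!}\right],
$$
using that $(2k-1)!!/(2j-1)!! $ is the product of the coefficients $2k-1, 2k-3, \dots, 2j+1$. Now recall that $(2k-1)!!$ is exactly the $2k$-th moment of the standard Gaussian, i.e. $\frac{1}{\sqrt{2\pi}}\int_{\R} x^{2k} e^{-x^2/2}\,dx$; also $\sum_{j\ge 1} \ell^{2j}/(2j-1)!!$ converges for every $\ell$, and in fact a short computation (differentiating, or recognizing the series) identifies
$$
1 + \sum_{j=1}^{\infty}\frac{\ell^{2j}}{(2j-1)!!} = 1 + \ell \int_0^{\ell} e^{(\ell^2 - s^2)/2}\,ds = \sqrt{2\pi}\, e^{\ell^2/2}\,\Phi([-\ell,\ell])/2 \cdot(\text{const}) ,
$$
which I'd pin down precisely so that the bracketed quantity in the formula for $P_k$ becomes
$$
(1-\alpha) - \alpha\left(\sqrt{\tfrac{\pi}{2}}\, e^{\ell^2/2}\,\mathrm{erf}(\ell/\sqrt2) - \text{lower order}\right).
$$

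Next, observe that the sign of $P_k(\ell)$ for large $k$ is governed by the sign of the limit $L(\ell) := (1-\alpha) - \alpha \sum_{j=1}^\infty \ell^{2j}/(2j-1)!!$, since $(2k-1)!! > 0$ and the partial sums increase to the full sum. The function $\ell \mapsto L(\ell)$ is continuous, strictly decreasing on $[0,\infty)$, equals $1-\alpha > 0$ at $\ell = 0$, and tends to $-\infty$; hence it has a unique positive root $\ell_\alpha$. If $\ell < \ell_\alpha$ then $L(\ell) > 0$ but the lower bound $0 \le P_k(\ell)$ still holds, so I must instead use the \emph{upper} bound: I will show $P_k(\ell) \le (1-\alpha)\ell^{2k}$ fails for large $k$ when $\ell < \ell_\alpha$, because $P_k(\ell) \sim (2k-1)!!\, L(\ell)$ grows like $(2k-1)!! \gg \ell^{2k}$ (the double factorial beats any fixed geometric rate). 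If $\ell > \ell_\alpha$, then $L(\ell) < 0$, so eventually the bracket is negative and $P_k(\ell) < 0$, violating the lower bound $0 \le P_k(\ell)$. Thus only $\ell = \ell_\alpha$ is consistent with \eqref{eq:ineq} for all $k$.

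Finally, I must translate the transcendental equation $L(\ell_\alpha) = 0$, i.e. $\sum_{j\ge1} \ell_\alpha^{2j}/(2j-1)!! = (1-\alpha)/\alpha$, into the stated form $\frac{\alpha}{2}\ell_\alpha = (1-\alpha)e^{-\ell_\alpha^2/2}/(\sqrt{2\pi}\,\Phi([-\ell_\alpha,\ell_\alpha]))$. For this I'll use the integral identity $1 + \sum_{j\ge1}\ell^{2j}/(2j-1)!! = e^{\ell^2/2}\big(1 + \ell\int_0^\ell e^{-s^2/2}\,ds\big)$; verifying this by checking both sides satisfy the same first-order ODE with the same value at $\ell=0$ is routine. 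Rewriting $\int_0^\ell e^{-s^2/2}ds = \sqrt{2\pi}\,\Phi([-\ell,\ell])/2 \cdot \sqrt2$... — more carefully, $\Phi([-\ell,\ell]) = \frac{1}{\sqrt{2\pi}}\int_{-\ell}^\ell e^{-s^2/2}ds = \frac{2}{\sqrt{2\pi}}\int_0^\ell e^{-s^2/2}ds$ — substituting and simplifying $L(\ell_\alpha)=0$ yields exactly the displayed equation.

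The main obstacle is establishing and correctly normalizing the closed-form/integral identity for $\sum_j \ell^{2j}/(2j-1)!!$ and then the asymptotic argument that the double factorial dominates $\ell^{2k}$ — the latter is what forces $\ell$ down to $\ell_\alpha$ rather than merely bounding it above; the rest is bookkeeping with Gaussian integrals.
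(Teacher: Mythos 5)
Your proposal is correct and follows essentially the same route as the paper: divide the unrolled recurrence by $(2k-1)!!$, identify $\sum_{j\ge 1} x^{2j}/(2j-1)!!$ as $x e^{x^2/2}\int_0^x e^{-s^2/2}\,ds$ via the ODE $g'(x)=1+xg(x)$, and use the two inequalities in \eqref{eq:ineq} together with the fact that $(2k-1)!!$ dominates $\ell^{2k}$ to force the limiting quantity $L(\ell)$ to vanish, with uniqueness coming from monotonicity. The only slip is your displayed identity $1+\sum_{j\ge1}\ell^{2j}/(2j-1)!! = e^{\ell^2/2}\bigl(1+\ell\int_0^\ell e^{-s^2/2}\,ds\bigr)$, whose right-hand side should be $1+\ell e^{\ell^2/2}\int_0^\ell e^{-s^2/2}\,ds$; this is exactly the normalization you flag as needing care, and it does not affect the final equation you derive.
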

Thus the above implies that necessarily $\ell_{\alpha}=q$ where $q$ appears in \eqref{cand123}. This was mentioned in Remark \ref{uniqsol}. 
\begin{proof}
To prove this, first we write the inequalities in a different form so that the polynomials stabilize. 
To this goal, let us define $$\tilde{P}_{k}=\frac{P_{k}}{(2k-1)!!}$$ where $(2k-1)!!=(2k-1)(2k-3)\ldots 1.$  
Then it follows from \eqref{eq:recursion} that $$\tilde{P}_{k}(\ell_{\alpha})=\tilde{P}_{k-1}(\ell_{\alpha})-\frac{\alpha}{(2k-1)!!}\ell_{\alpha}^{2k}.$$
Hence $$\tilde{P}_k(\ell_{\alpha})=\left(1-\alpha-\sum_{i=1}^{k}\frac{\alpha \ell_{\alpha}^{2i}}{(2i-1)!!}\right).$$
The inequalities in \eqref{eq:ineq} translate to 
\begin{equation}\label{eq:powerineq}
0\le 1-\alpha-\sum_{i=1}^{k}\frac{\alpha \ell_{\alpha}^{2i}}{(2i-1)!!}\le \frac{\ell_{\alpha}^{2k+2}}{(2k+1)!!}.
\end{equation}
Let us first identify the power series $$g(x)=\sum_{i=1}^{\infty}\frac{x^{2i-1}}{(2i-1)!!}.$$
Clearly the power series converges absolutely for all values of $x$. It is also standard to show that one can interchange differentiation and the sum in the expression for $g(\cdot)$. Thus we get that,
$$\frac{dg(x)}{dx}=1+xg(x).$$ Solving this differential equation using integrating factor $e^{-x^2/2}$ and the fact that $g(0)=0$ we get $$g(x)=e^{x^2/2}\int_{0}^{x}{e^{-y^2/2}}dy.$$ 
As $k \to \infty $, the upper bound in \eqref{eq:powerineq} converges to $0$ for any value of $\ell_{\alpha}$. Also the expression in the middle converges to $1-\alpha-\alpha \ell_{\alpha}g(\ell_{\alpha}).$
Thus taking the limit in \eqref{eq:powerineq} as $k\rightarrow \infty$ we get that $\ell_{\alpha}>0$ satisfies
\begin{equation}\label{key}
\ell_{\alpha}g(\ell_{\alpha})=\frac{1-\alpha}{\alpha}.
\end{equation}
Clearly this is the same as the equation appearing in the statement of the lemma. Also notice that since $x g(x)$ is monotone on the positive real axis, by the uniqueness of the solution of \eqref{key} we get $\ell_{\alpha}=q$ where $q$ appears in \eqref{cand123}. Hence we are done.
\end{proof}
\noindent

The value of $\ell_{\alpha}$ that solves \eqref{key} when $\alpha=1/2$ is approximately $0.878$. Figure \ref{f.con} shows the numerical convergence of $\beta_t/\sqrt{t}$ to $q_{\alpha}$ for various values of $\alpha$.

\begin{figure}[H]
\begin{center}
\centering
\includegraphics[scale=1]{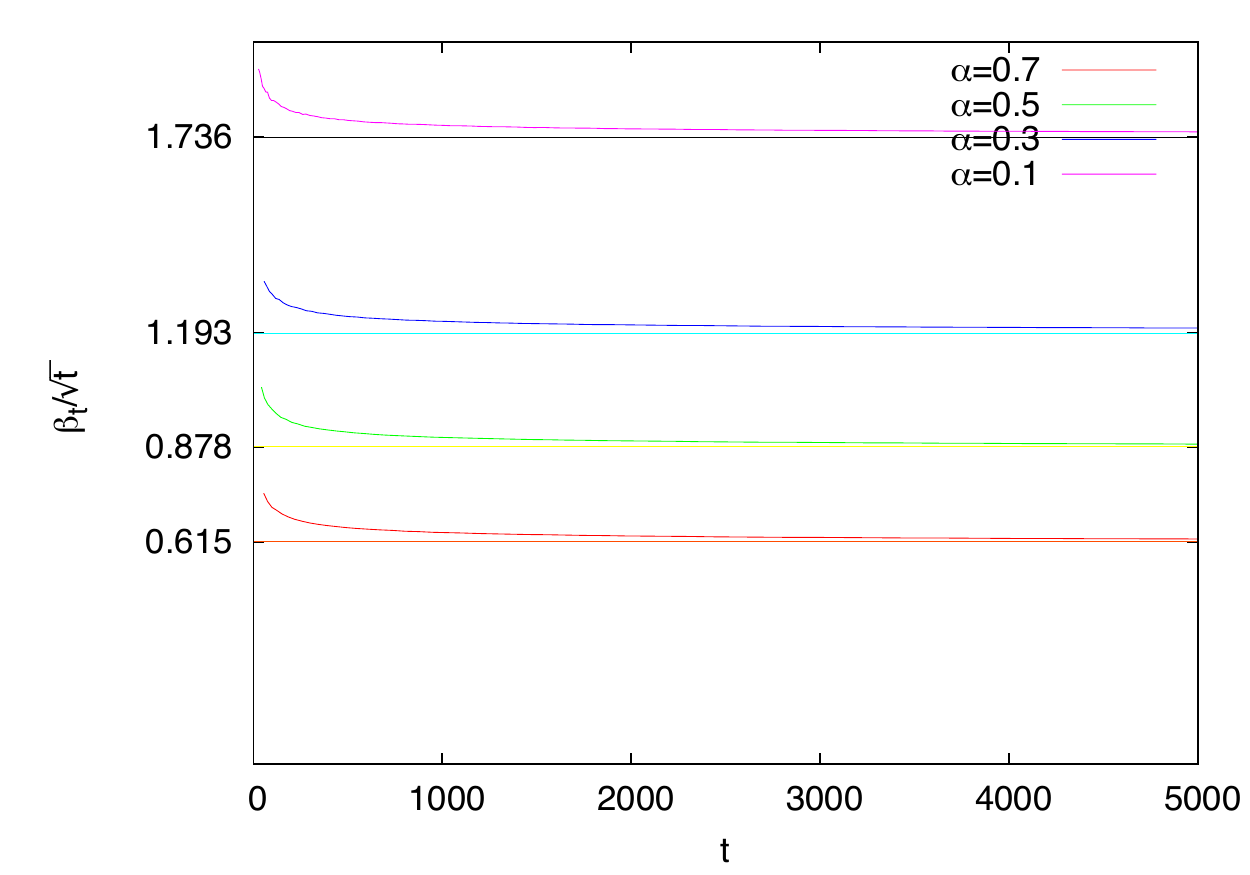}
\caption{Convergence of $\beta_t/\sqrt{t}$ for various $\alpha$. The horizontal lines denote the values $q_{\alpha}$ and the curves plot $\frac{\beta_t}{\sqrt{t}}$ as a function of time $t$. }
\label{f.con}
\end{center}
\end{figure}

Thus, assuming Conjecture \ref{a2}, by Lemma \ref{thmq}, $\tilde f_t$ converges to $f$ (as stated in \eqref{frozconv}) which consists of two atoms of size $\frac{\alpha}{2}$ at $q$ and $-q$. 
To conclude the proof of Theorem \ref{main}, we now show $\tilde \nu_{t}$ converges to the absolutely continuous part of $\mu$ (see \eqref{cand123}). 
Recall that by \eqref{conv12} and Lemma \ref{thmq} the $2k^{th}$ moment of $\tilde \nu_t$ converges to $P_{k}(q).$
We will use the following well known result:

\begin{lemma}[30.1, \cite{billingsley}]
\label{moments}
Let $\mu$ be a probability measure on the line having finite moments $\alpha_k = \int_{-\infty}^{\infty} x^k \mu(dx)$ of all orders. If the power series $\displaystyle\sum_k \alpha_k r^k/k!$ has a positive radius of convergence, then $\mu$ is the only probability measure with the moments $\alpha_1, \alpha_2, \ldots$.
\end{lemma}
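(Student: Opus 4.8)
The plan is to deduce the lemma from the classical uniqueness theorem for characteristic functions, using complex analysis to convert ``equality of all moments'' into ``equality of characteristic functions''. The key observation is that the hypothesis on the power series $\sum_k\alpha_k r^k/k!$ forces $\mu$ to have an exponentially integrable tail, and this makes the characteristic function $\varphi(t)=\int e^{itx}\,\mu(dx)$ extend to a function holomorphic in a horizontal strip $S_R=\{z\in\mathbb C:|\operatorname{Im}z|<R\}$ about the real axis, where $R>0$ is the radius of convergence. Such a holomorphic function is determined on all of $S_R$ by its Taylor coefficients at $0$, and those coefficients are $i^k\alpha_k/k!$. The same reasoning will apply to any competing probability measure with the same moments, so the two characteristic functions must coincide on $\mathbb R$, and hence so must the measures.

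First I would establish exponential integrability. Writing $\beta_k=\int|x|^k\,\mu(dx)$, one has $\beta_{2k}=\alpha_{2k}\ge0$, and positivity of the radius of convergence of $\sum_k\alpha_k r^k/k!$ gives absolute convergence of the even subseries $\sum_k\alpha_{2k}r^{2k}/(2k)!$ for $|r|<R$. The odd absolute moments are then controlled by Cauchy--Schwarz, $\beta_{2k+1}\le\sqrt{\beta_{2k}\beta_{2k+2}}$, together with the elementary inequality $\sqrt{(2k)!\,(2k+2)!}\ge(2k+1)!$, which lets me bound $\beta_{2k+1}r^{2k+1}/(2k+1)!$ by the geometric mean, hence by the arithmetic mean, of two consecutive even terms. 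Summing both series and applying Tonelli yields $\int e^{r|x|}\,\mu(dx)=\sum_k\beta_k r^k/k!<\infty$ for $0\le r<R$.

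Next I would check that $\varphi(z)=\int e^{izx}\,\mu(dx)$ is well defined and holomorphic on $S_R$: for $|\operatorname{Im}z|\le r<R$ one has $|e^{izx}|=e^{-x\operatorname{Im}z}\le e^{r|x|}$, so the integral converges uniformly on compact subsets of $S_R$, and differentiation under the integral sign (dominated by the same bound) shows holomorphy with $\varphi^{(k)}(0)=i^k\alpha_k$. Now let $\nu$ be any probability measure with the same moments. Since $\int|x|^{2k}\,\nu(dx)=\alpha_{2k}$, the first step applies verbatim to $\nu$, so its characteristic function $\psi$ is holomorphic on $S_R$ with $\psi^{(k)}(0)=i^k\alpha_k=\varphi^{(k)}(0)$. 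Hence $\varphi$ and $\psi$ agree in a neighbourhood of $0$, hence on the connected set $S_R$ by the identity theorem, and in particular on $\mathbb R$; since the characteristic function of a probability measure determines it, $\nu=\mu$.

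The main obstacle is the passage from the purely formal hypothesis on $\sum_k\alpha_k r^k/k!$ to genuine analytic control of $\varphi$: concretely, the slightly fiddly Cauchy--Schwarz estimate bounding the odd absolute moments by the even ones so that $\int e^{r|x|}\,\mu(dx)<\infty$ genuinely follows, and the justification of differentiation under the integral sign that upgrades $\varphi$ to a holomorphic function on the strip (alternatively one can invoke Morera's theorem together with Fubini). Once exponential integrability and analyticity are in hand, the remaining ingredients---the identity theorem for holomorphic functions and the uniqueness theorem for characteristic functions---are entirely standard.
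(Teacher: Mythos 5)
Your argument is correct: the Cauchy--Schwarz control of the odd absolute moments by the even ones, the resulting exponential integrability, and the extension of the characteristic function to a holomorphic function on the strip $\{|\operatorname{Im}z|<R\}$ followed by the identity theorem and uniqueness for characteristic functions together give a complete proof. Note, however, that the paper does not prove this statement at all --- it is quoted verbatim as Theorem 30.1 of Billingsley's \emph{Probability and Measure} --- and your write-up is essentially the standard textbook proof of that theorem (Billingsley runs the same moment estimates but stays on the real line, propagating the Taylor expansion of $\varphi$ stepwise along $\mathbb{R}$ rather than invoking holomorphy in a complex strip), so there is nothing to reconcile with the paper itself.
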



Thus, to complete the proof of Theorem \ref{main} we need to show the following:
\\

\textbf{Claim.} The $2k^{th}$ moment of the measure  $(1-\alpha)\Phi_{q}$ is $P_{k}(q)$ where $q$ appears in \eqref{cand123}.
\\




To prove this claim, it suffices to show that the moments of $(1-\alpha)\Phi_{q}$ satisfy the recursion \eqref{eq:recursion}. Recall that $q=q_{\alpha}$. Let $C=C_{\alpha}:=\frac{\sqrt{2\pi}\Phi([-q,q])}{1-\alpha}.$  
Using integration by parts we have:
\begin{eqnarray*}
 \frac{\int_{-q}^{q} x^{2k}e^{-\frac{x^2}{2}} dx} {C} &=& \frac{ \int_{-q}^{q}x^{2k-1}xe^{-\frac{x^2}{2}} dx }{C} \\
&=& -\frac{2q^{2k-1}e^{-\frac{q^2}{2}}}{C}+\frac{(2k-1)}{C}\int_{-q}^{q}x^{2k-2}e^{-\frac{x^2}{2}}dx.
\end{eqnarray*}

By the relation that $q$ satisfies in the statement of Theorem \ref{main}, the first term on the RHS without the $-$ sign is $\alpha q^{2k}$. Also, note that the second term is $(2k-1)$ times the $(2k-2)^{nd}$ moment of $(1-\alpha)\Phi_q$. Thus, the moments of $(1-\alpha)\Phi_{q}$ satisfy the same recursion as in \eqref{eq:recursion}.
\\

Now from Example 30.1 in \cite{billingsley}, we know that the absolute value of the $k^{th}$ moment of the standard normal distribution is bounded by $k!$. Then, similarly, the absolute value of the $k^{th}$ moment of our truncated Gaussian, $\Phi_q$, is bounded by $c^k k!$ for a constant $c$. Then Lemma~\ref{moments} implies that $\Phi_q$ is determined by its moments and quoting Theorem 30.2 in \cite{billingsley} we are done.

\qed

%
%

\section{Concluding Remarks}\label{conc} 
We conclude with a brief discussion about a possible approach towards proving Conjectures \ref{a2} and \ref{a1} and some experiments in higher dimensions. \\

\begin{figure}[H]
\centering
\includegraphics[scale=0.7]{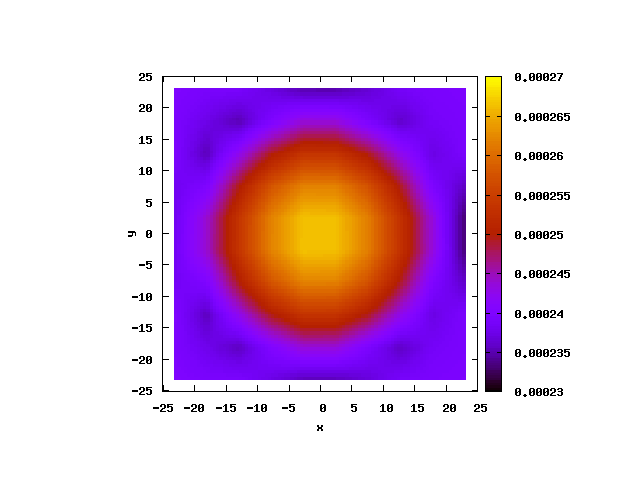}
\caption{Heat map of the free mass distribution after 1000 steps in $2$ dimensions for \CC-1/2.}
\label{hm1}
\end{figure}
The free part $\nu_t$ of the distribution $\mu_t$ could represent the distribution of a random walk in a growing interval. If the interval boundaries grow diffusively, the scaling limit of this random process will be a reflected Ornstein-Uhlenbeck process on this interval $[-q,q]$. 
We remark that the stationary measure for Ornstein-Uhlenbeck process reflected on the interval  is known to be the same truncated Gaussian which appears in Theorem \ref{main}, see \cite[(31)]{ouh}. This connection could be useful in proving the conjectures.

We also note that similar results are expected in higher dimensions; in particular, the mass distribution should exhibit rotational symmetry. See Fig \ref{hm1}. Note the truncated Gaussian shape for the slices $x=0$ (Fig \ref{sl1}).

\begin{figure}[H]
\centering
\includegraphics[scale=0.4]{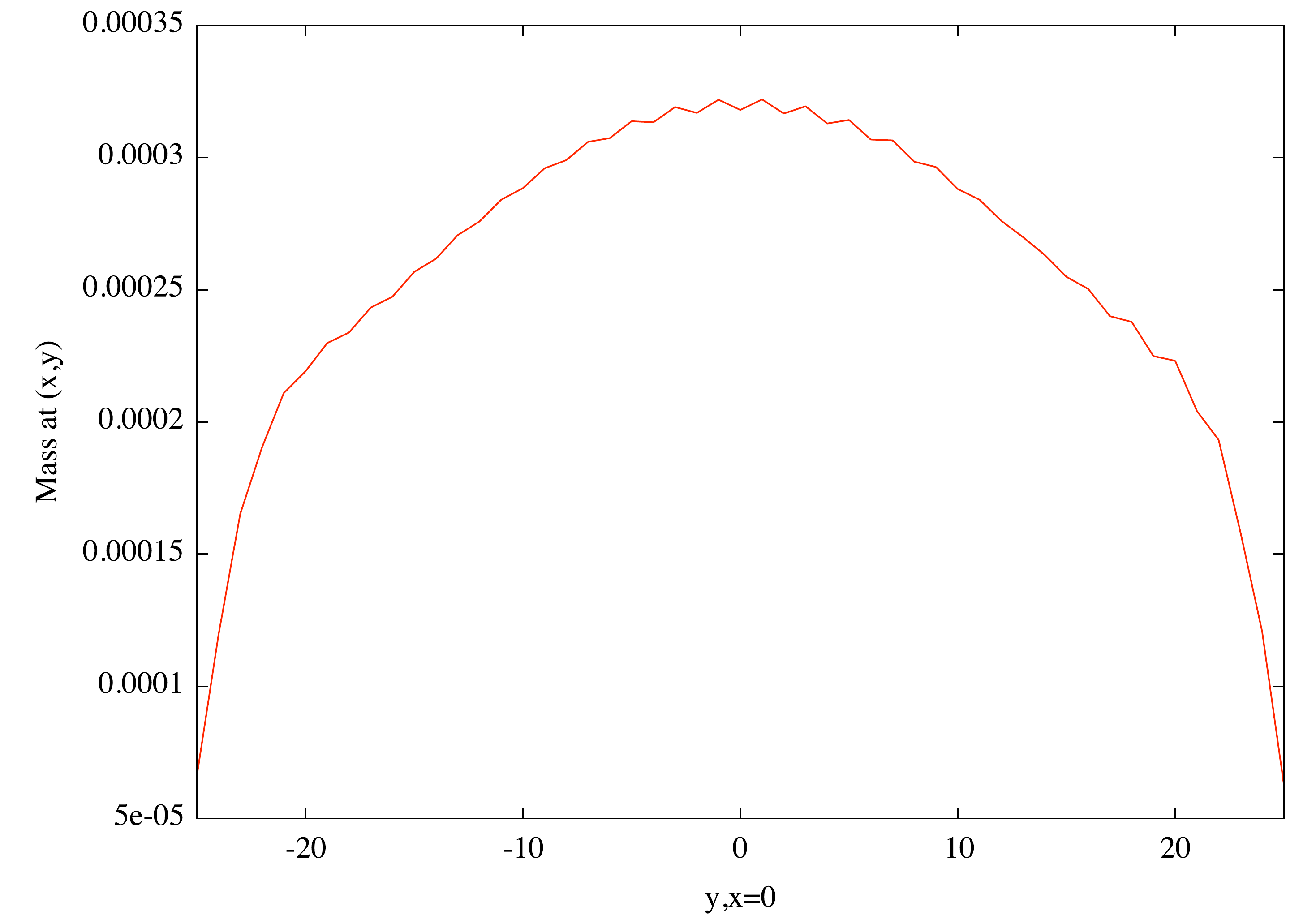}
\caption{Slice of the free mass distribution at $x=0$ after 1000 steps in $2$ dimensions for the analogue of \CC-1/2.}
\label{sl1}
\end{figure}

\section*{Acknowledgements}
The authors thank  Matan Harel, Arjun Krishnan and Edwin Perkins for helpful discussions. Part of this work has been done at Microsoft Research in Redmond and the first two authors thank the group for its hospitality.

\bibliographystyle{plain}
  \bibliography{freeze}

\begin{thebibliography}{1}

\bibitem{billingsley}
Patrick Billingsley.
\newblock {\em {Probability and Measure}}.
\newblock Wiley, New York, NY, 3rd edition, 1995.

\bibitem{gravner2000}
Janko Gravner and Jeremy Quastel.
\newblock Internal DLA and the Stefan problem.
\newblock {\em Ann. Probab.}, 28(4):1528--1562, 2000.

\bibitem{ouh}
Vadim Linetsky.
\newblock On the transition densities for reflected diffusions.
\newblock {\em Advances in Applied Probability}, 37:435--460, 2005.

\end{thebibliography}

\end{document}